\documentclass[12pt,oneside]{amsart}

\usepackage{graphicx}
\usepackage{amsfonts}
\usepackage{epsf}
\usepackage{amssymb}
\usepackage{amsmath}
\usepackage{amscd}
\usepackage{tikz}
\usepackage{pdfpages}
\usepackage{fancyhdr}
\usepackage{setspace}
\usepackage{hyperref}
\usepackage[all]{xy}
\usetikzlibrary{matrix}

\theoremstyle{definition}
\newtheorem{theorem}{Theorem}
\newtheorem{prop}[theorem]{Proposition}
\newtheorem{lem}[theorem]{Lemma}

\newtheorem{rem}[theorem]{Remark}

\newtheorem{example}[theorem]{Example}

\topmargin = 0in 
\textwidth = 6in
\textheight = 8in
\oddsidemargin = .5in
\evensidemargin = .5in
\begin{document}

\rhead{\thepage}
\lhead{\author}
\thispagestyle{empty}

%\tableofcontents
%\listoffigures

%\newpage
\raggedbottom
\pagenumbering{arabic}
\setcounter{section}{0}
\def\co{\colon\thinspace}
%%%%%%%%%%%%%%%%%%%%%%%%%%%%%%%%%%%%%%%%%%%%%%%%%%%%%%%%
%%%%%%%%%%%%%%%%%%%%%%%%%%%%%%%%%%%%%%%%%%%%%%%%%%%%%%%%
%%%%%%%%%%%%%%%%%%%%%%%%%%%%%%%%%%%%%%%%%%%%%%%%%%%%%%%%

\title{Generic fibrations around multiple fibers}
\author{Kyle Larson}

\address{
Department of Mathematics \\ % \hfill (Received 00 00 2010)\\
University of Texas at Austin   \\ %\hfill (Revised  00 00 2010)\\
Austin, TX\\
USA}
\email{klarson@math.utexas.edu}

\begin{abstract}
Given some type of fibration on a 4-manifold $X$ with a torus regular fiber $T$, we may produce a new 4-manifold $X_T$ by performing torus surgery
on $T$. There is a natural way to extend the fibration to $X_T$, but a multiple fiber (non-generic) singularity is introduced.
We construct explicit generic fibrations (with only indefinite fold singularities)
in a neighborhood of this multiple fiber. As an application this gives explicit constructions of broken Lefschetz
fibrations on all elliptic surfaces (e.g. the family $E(n)_{p,q}$). As part of the construction we produce generic fibrations around
exceptional fibers of Seifert fibered spaces.
\end{abstract}

\maketitle

\section{Introduction}

Various types of singular fibrations have proved to be powerful tools in the study of smooth 4-manifolds. The classical theories of Lefschetz and elliptic 
fibrations are a rich source of interesting examples and provide connections to algebraic geometry, symplectic geometry, and gauge theory.
More recently it has been shown that every smooth closed 4-manifold admits a broken Lefschetz fibration (see, for example, Akbulut and Karakurt \cite{AK}, Baykur \cite{Bay08} , Gay and Kirby \cite{GK}, and Lekili \cite{Lek}),
or alternatively a purely wrinkled fibration (which are also called indefinite Morse 2-functions).
On the other hand, torus surgery (also called a \emph{log transform}) is perhaps the most important surgical tool for 4-manifolds.
Indeed, by a result of Baykur and Sunukjian \cite{BS}, if $X'$ is an exotic (i.e. homeomorphic but not diffeomorphic) copy of the simply-connected and closed 4-manifold $X$, then $X'$ can be 
obtained from $X$ by some sequence of torus surgeries. 
In this paper we integrate these two perspectives by studying the result
of torus surgery on a regular fiber of a map to a surface. In particular, we construct
nice fibrations in a neighborhood of the glued in torus that agree with
the original fibration on the boundary. The existence of such fibrations
follows from a more general result of Gay and Kirby  \cite{GK}, but here we produce the first explicit
examples. Our work also fits nicely into the context of Baykur and Sunukjian \cite{BS}, where the
authors discuss when broken Lefschetz fibrations on different manifolds can be related by torus surgery and homotopy modifications of the fibration.
Our construction illustrates this for some specific examples.

\emph{Acknowledgments.} The author would like to thank the following people for helpful comments and conversations: \.{I}nan\c{c} Baykur, Stefan Behrens,
his advisor Robert Gompf, and \c{C}a\u{g}ri Karakurt.

\section{Torus surgery on a fiber}\label{torus}

Let $X$ be a smooth 4-manifold and $\Sigma$ a smooth surface, with $f \co X \to \Sigma$ some type of fibration map
(e.g. an elliptic fibration or broken Lefschetz fibration, but in general we just require $f$ to be proper and smooth). If $T \subset X$
is a regular fiber diffeomorphic to a torus, then we can identify a tubular neighborhood $\nu T$ with $T^2 \times D^2$ and a neighborhood of $f(T)$ with $D^2$ 
such that $\left.f\right|_{T^2 \times D^2}$ is projection onto the second factor. Let $\phi \co \nu T \to T^2 \times D^2$
be such an identification. Torus surgery on $T$ is the operation of cutting out $\nu T$ and
gluing in $T^2 \times D^2$ by $\phi ^{-1} \circ \psi$, where $\psi$ is a self-diffeomorphism of $\partial (T^2 \times D^2)$. 
Let $X_T$ be the resulting manifold $X \setminus \nu T \cup _{\phi ^{-1} \circ \psi} T^2 \times D^2$. Since gluing in $T^2 \times D^2$
amounts to attaching a 2-handle, two 3-handles, and a 4-handle, the diffeomorphism type of $X_T$ is determined by the attaching sphere of the 2-handle:
$\phi ^{-1} \circ \psi (\{pt\} \times \partial D^2)$ (the framing is canonical). The isotopy class of this curve is then determined
by the homology class $\gamma = \psi _\ast [\{pt\} \times \partial D^2] \in H_1 (T^2) \oplus \mathbb{Z}$, where the $\mathbb{Z}$ factor is
generated by $m = [\{pt\} \times \partial D^2]$. Now $\gamma$ must be a primitive element, so $\gamma = q\alpha + pm$ for relatively prime
integers $p$ and $q$ and $\alpha$ a primitive element of $H_1 (T^2)$.
Hence, given our identification $\phi$, $X_T$ is determined up to diffeomorphism by the data $p$, $q$, and $\alpha$, which are called
the \emph{multiplicity}, the \emph{auxiliary multiplicity}, and the \emph{direction}. We say the surgery is \emph{integral} if $q= \pm 1$.
For more exposition see Gompf and Stipsicz \cite{GS}.

Now fixing a specific torus surgery determined by $p$, $q$, and $\alpha$, we may change our identification $\phi$ so that the
direction $\alpha$ corresponds to the second $S^1$ factor of $T^2 \times D^2 = S^1 \times S^1 \times D^2$. To be precise, we
compose $\phi$ with a map $g \times id \co T^2 \times D^2 \to T^2 \times D^2$, where $g$ is some self-diffeomorphism of
$T^2$ that sends a curve representing $\alpha$ to $\{pt\} \times S^1$. We abuse notation by renaming this new identification $\phi$.
In doing this we have not changed the surgery,
but we have changed how we look at a neighborhood of $T$ in order to make things more convenient for what follows.

We are interested in which surgeries on $T$ allow the fibration $\left.f\right|_{X \setminus \nu T}$ to be extended over $X_T$.
By our above remarks, up to diffeomorphism we can choose our gluing map $\psi$ to be (thinking of $\partial (T^2 \times D^2)$ as
$\mathbb{R}^3 / \mathbb{Z}^3$):
\[\psi = \left( \begin{array}{ccc}
1 & 0 & 0 \\
0 & (qk+1)/p & q \\
0 & k & p \end{array} \right)\] 
where $k$ is an integer satisfying $qk+1 \equiv 0$ mod $p$ (if $p=0$, set the center entry to 0). If we instead think of $T^2 \times D^2$ as
$\{ (\xi_1, \xi_2, z) \subset \mathbb{C}^3 \mid \xi_i \in S^1 \subset \mathbb{C}, z \in D^2 \subset\mathbb{C}\}$,
then we can write $\psi$ multiplicatively as $\psi (\xi_1, \xi_2, z) = (\xi_1, \xi_2^{(qk+1)/p} \cdot z^q, \xi_2^k \cdot z^p)$ (see Harer, Kas, Kirby \cite{HKK} for more information).
Now we can see that if $p \ne 0$ the fibration extends over the glued in $T^2 \times D^2$ by 
defining $\hat f \co X_T \to \Sigma$ by
\begin{equation*}
\hat f(x) = \left\{
	\begin{array}{ll}
		f(x)  & \mbox{if } x \in X \setminus \nu T \\
		\xi_2^k \cdot z^p & \mbox{if } x = (\xi_1, \xi_2, z) \in T^2 \times D^2
	\end{array}
\right.
\end{equation*}
One can check that the fibration is exactly $S^1$ times the fibration around a $(p, -k)$ exceptional fiber in a Seifert fibered space.
Hence the central fiber $T=T^2 \times \{0\}$ is $p$-times covered by nearby fibers and the homology class of a nearby fiber $[F] = p \cdot [T]$.
Furthermore, if $p > 1$ then
one can compute in local coordinates that $d \hat f$ vanishes on $T$ and is a submersion everywhere else in $T^2 \times D^2$ 
(if $p = 1$ the fibration extends over $T^2 \times D^2$ with no singularity). For $p > 1$
we say that $T$ is a \emph{multiple fiber} singularity of $\hat f$. Since $d \hat f$ vanishes on a 2-dimensional subspace, $\hat f$
cannot be a generic map to a surface (near $T$).

The purpose of this paper is to construct indefinite generic fibrations on $T^2 \times D^2$
that agree with $\hat f$ on $\partial (T^2 \times D^2)$.

\section{Constructing generic fibrations}

Our strategy will be to construct generic fibrations using round handles. An $(n+1)$-dimensional round $k$-handle is $S^1 \times h_k^n$,
where $h_k^n$ is an $n$-dimensional $k$-handle, and it is attached along $S^1$ times the attaching region of $h_k^n$ (see Baykur \cite{Bay09}, Baykur and Sunukjian \cite{BS} for more information about round handles).
If we are attaching a round handle to a manifold whose boundary fibers over $S^1$, so that a single $h_k^n$ is attached to each fiber,
then we can extend the boundary fibration over the round $k$-handle by taking the Morse level sets of each $h_k^n$
(and adjusting the fibration in a collar neighborhood of the boundary). However, this
fibration will have a \emph{fold singularity}, which by definition is a singular set that locally looks like $\mathbb{R}$ times a Morse singularity.
More precisely, there exist local coordinates $(t, x_1, \cdots, x_n)$ around each critical point such that the fibration map is given by 
$(t, x_1, \cdots, x_n) \mapsto (t, x_1^2 \pm \cdots \pm x_n^2)$ in these coordinates. Importantly for our purposes, fold singularities
of maps to surfaces are a generic type of singularity.
The fold singularity is called \emph{indefinite} if the Morse singularity in the above coordinates is indefinite
(i.e. the Morse critical point does not have index equal to 0 or $n$). 

\begin{rem}
 
In what follows we will abuse terminology and call a fibration \emph{generic} if its singularities consist of only
indefinite fold singularities. It is a fact from singularity theory that such fibrations do belong to the set of generic (and stable) maps, but these
fibrations are actually a very special subset of all generic maps. Indeed, maps from a 4-manifold to a surface with only these singularities
are a subset of both broken Lefschetz fibrations (which can also contain Lefschetz singularities) and purely wrinkled fibrations (which can also
contain indefinite cusp singularities).

\end{rem}

\begin{figure}
\includegraphics[scale=0.4]{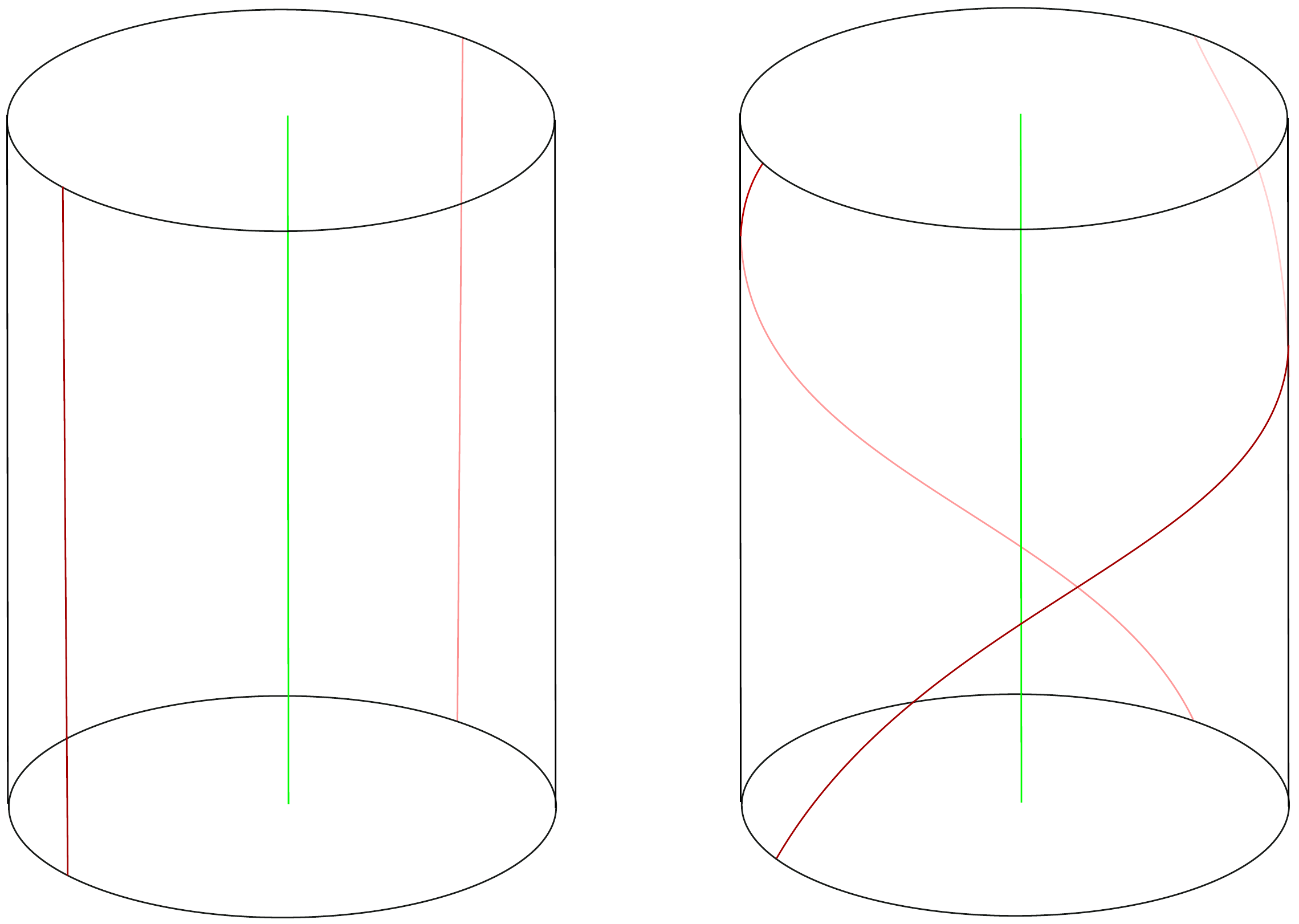}
\centering
 \caption{A neighborhood of a (2,1) exceptional fiber. On the left the ends of the solid cylinder are identified by a 180 degree twist, whereas
 on the right the ends are identified by the identity map. The green arc becomes the exceptional fiber under the identification, and the red arcs
 become a single fiber on the boundary.}
\label{exfibers}
\end{figure}

First we do our construction for a neighborhood of an exceptional fiber
in a Seifert fibered space. Recall that the neighborhood of a $(p,q)$ exceptional fiber can be formed by taking a solid cylinder and identifying the two ends with
a $2\pi q/p$ twist. In fact, we start with the simplest possible case: the neighborhood of a (2,1) exceptional fiber.
Let $N$ be a tubular neighborhood of a (2,1) exceptional fiber. Then $N$ is diffeomorphic to a solid cylinder with ends identified
with a 180 degree twist (see Figure \ref{exfibers}). The exceptional fiber is the circle formed by identifying the two ends of the central arc. A regular fiber of $N$
consists of two arcs opposite each other and equidistant from the central arc, which form a single circle
after the identification of the ends of the cylinder. We can also view $N$
as a solid cylinder with ends identified by the identity map, but now regular fibers twist around the central fiber (see the second picture of Figure \ref{exfibers}).
Let $f \co N \to D^2$ be the fibration map (note that $f$ is not simply the projection of the solid cylinder; we have to compose
with the  2 to 1 branched covering map of the disk).

 \begin{figure}
\includegraphics[scale=0.4]{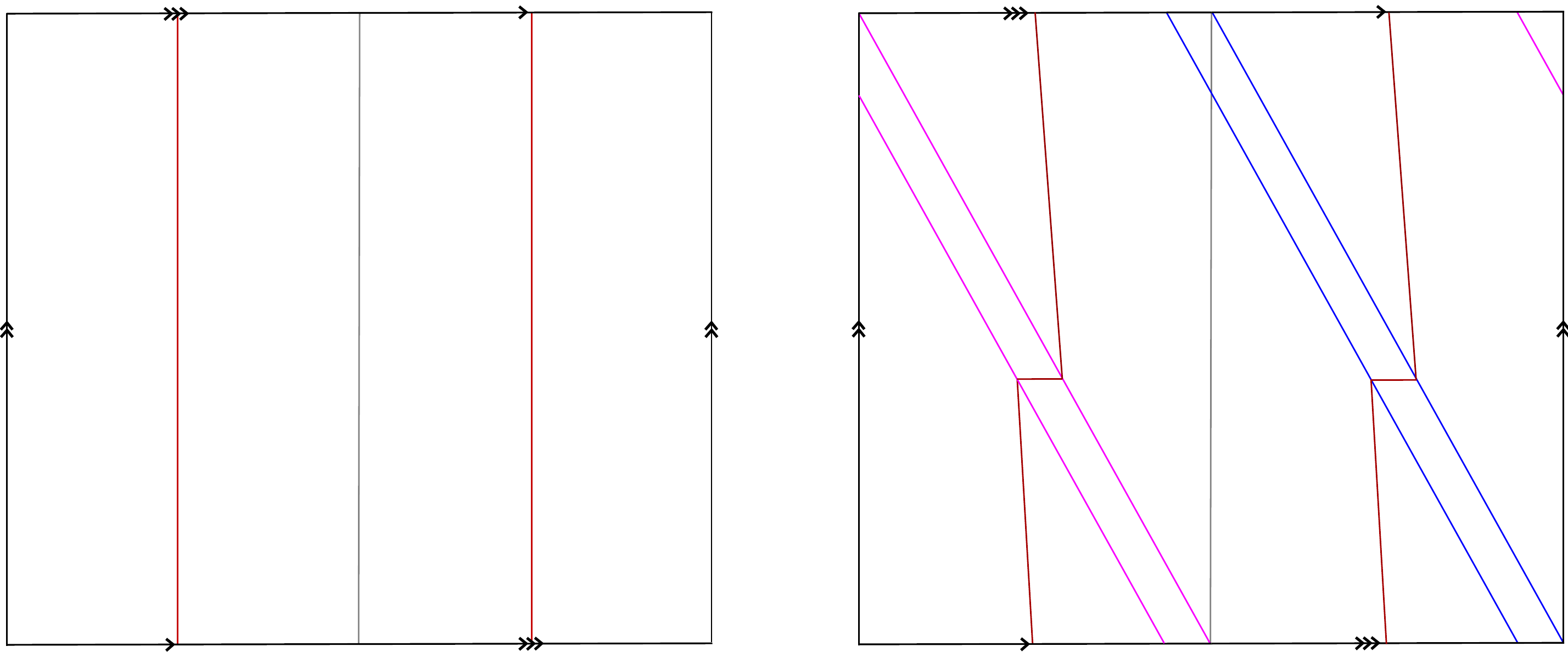}
 \centering
 \caption{The fibering on $\partial N$. The top edges are identified with the bottom edges with a 1/2 shift to the right.
 The red arcs form a single fiber (note that the gray middle arc is \emph{not} part of the fibration),
 and we see the result of the isotopy of the fibration on $\partial N$ in the second picture.
 The diagonal strips form the attaching region for the 3-dimensional round 1-handle, and the two horizontal sections of the red fiber
 are the attaching regions for the 2-dimensional 1-handle.}
\label{squares}
\end{figure}
 
 \begin{figure}
\includegraphics[scale=0.4]{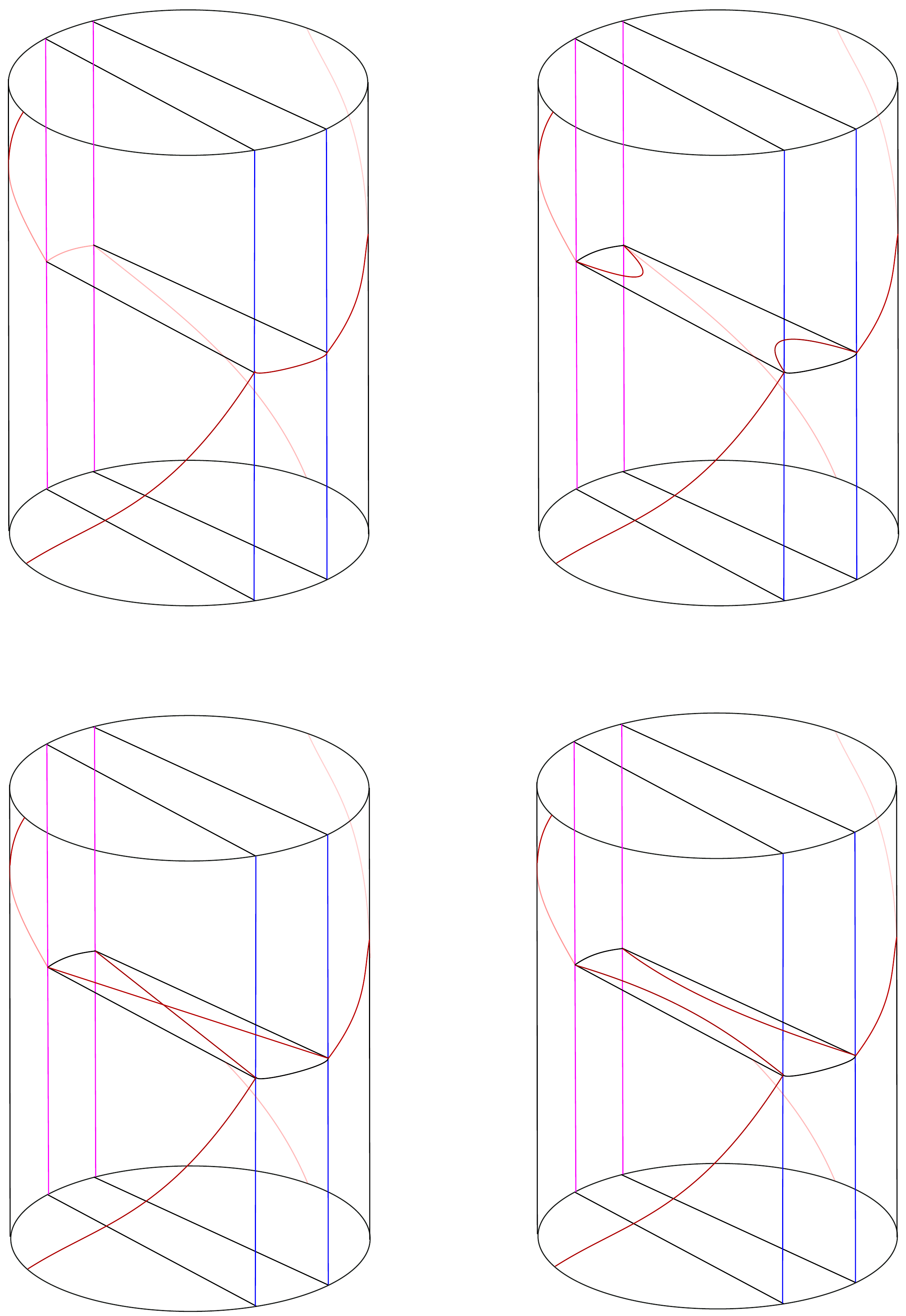}
 \centering
 \caption{Extending the fibration across $R$. The top and bottom of the cylinders are identified by the identity map,
 and we see $R$ as the rectangular prism with top and bottom identified. In each picture the red arcs
 form a single fiber. The first picture shows the fibration on $\partial N$ (after the isotopy),
 and in the following pictures the fiber get pushed across the
 2-dimensional 1-handle (while the arcs on the boundary of the cylinder actually live in a collar $\partial N \times I$).
 In the first two pictures the fiber is a single circle wrapping twice around the cylinder.
 The third picture is the singular level, where the fiber consists of the wedge of two circles. The last picture shows a fiber past
 the singular level, and the fiber consists of two disjoint circles. Here we see that after extending the fibration across $R$ we get two
 ``chambers''  with torus boundaries, each fibered by (1,1) curves.}
\label{cylinders}
\end{figure}

\begin{lem} 
$N$ admits a generic fibration $\hat f \co N \to D^2$ such that $\left.\hat f\right|_{\partial N} = \left.f\right|_{\partial N}$,
with one indefinite fold singular locus. The image of the critical set is an embedded circle in $D^2$, and the preimage of a point in the
interior of this circle is two disjoint circles.
\end{lem}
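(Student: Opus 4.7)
The plan is to build $\hat f$ by attaching a round $1$-handle in a collar of $\partial N$. I would first identify $\partial N$ with $T^2$ using the square model of Figure \ref{squares}, with top and bottom identified by a $1/2$ shift, so that each regular fiber $f|_{\partial N}^{-1}(\text{pt})$ appears as two horizontal arcs meeting in a single circle after the identification. Using a collar $\partial N \times [0,1] \subset N$, I would isotope the fibration on the inner boundary $\partial N \times \{0\}$ to the second picture of Figure \ref{squares}, which separates the two horizontal pieces of each fiber; the original boundary fibration $f|_{\partial N}$ is recovered on $\partial N \times \{1\}$, so any fibration I construct inside $\partial N \times \{0\}$ will automatically satisfy the boundary condition of the lemma.

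Next I would install a $3$-dimensional round $1$-handle $R = S^1 \times h_1^2$ just inside this collar, attached along the two parallel diagonal strips visible in Figure \ref{squares}. Each fiber of the $S^1$-factor of $R$ is a $2$-dimensional $1$-handle whose attaching arcs lie on the two horizontal segments of a regular fiber. As one slides across this $2$-handle the $(2,1)$-fiber is split into two disjoint circles, passing through a figure-eight at the Morse critical level, as shown in Figure \ref{cylinders}. Summing over the $S^1$-factor of $R$ yields an $S^1$-family of Morse functions, which is precisely an indefinite fold singularity whose critical image is an embedded circle in $D^2$.

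After attaching $R$, the remaining portion of $N$ has boundary consisting of two tori, each carrying the $(1,1)$ torus fibration of Figure \ref{cylinders}. I would then check that these two ``chambers'' are standard solid tori on which the $(1,1)$ fibers bound compressing disks; the fibration then extends over each chamber as a trivial $D^2$-fibration over $D^2$ without further singularities. Piecing everything together produces the desired map $\hat f \colon N \to D^2$, and by construction the preimage of a point in the interior of the fold circle is the disjoint union of two circles, one from each chamber.

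The main obstacle is the last step: verifying that the complement of (collar $\cup$ $R$) inside $N$ is precisely two standard solid tori, and that the $(1,1)$ boundary fibers bound compressing disks compatibly with the fibration. This is a global topological check rather than a purely local handle computation, and it has to be carried out while keeping track of the twisted identification that builds the $(2,1)$ exceptional fiber in the first place. Once this is in hand, the generic fibration conditions (single indefinite fold, circular critical image, two-circle regular fiber over the interior) follow directly from the local model of a round $1$-handle attachment.
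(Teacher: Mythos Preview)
Your overall strategy is the same as the paper's: isotope the boundary fibration in a collar so that each fiber crosses the two diagonal strips transversely, attach a single $3$-dimensional round $1$-handle $R$ along those strips (producing the one indefinite fold with circular image), and then fill the two remaining chambers with trivially fibered solid tori.

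The description of the final filling step, however, is backwards and would not work as written. The $(1,1)$ curves on the two boundary tori do \emph{not} bound compressing disks in the chambers; if they did, they would be meridians, and the resulting piece would be fibered by disks over a \emph{circle}, which is incompatible with a map to $D^2$. (Relatedly, a ``trivial $D^2$-fibration over $D^2$'' is $4$-dimensional, so that cannot be what fills a $3$-dimensional chamber.) What you actually need is the opposite: each $(1,1)$ curve is isotopic to the \emph{core} of its solid torus, so the boundary circle fibration extends over the chamber as a trivial $S^1$-bundle over $D^2$ with no new singularities. This is exactly what the paper reads off from Figure~\ref{cylinders}. Once this point is corrected, your argument coincides with the paper's proof, and the ``main obstacle'' you flag---that the complement of the collar and $R$ in $N$ really is two standard solid tori fibered this way---is handled there by direct inspection of that figure rather than by any further computation.
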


\begin{proof}
 
 Our strategy will be to delete int$N$, and then fill it back in (relative to the boundary) 
 with one round 1-handle and two trivially fibered solid tori in
 such a way as to extend the fibration on $\partial N$. Recall a 3-dimensional round 1-handle is a copy of $S^1 \times D^1 \times D^1$
 attached along an embedding of $S^1 \times \partial D^1 \times D^1$. We can think of this as adding a circle's worth of 2-dimensional
 1-handles. In our case we attach a single 2-dimensional 1-handle to each $S^1$ fiber of $\partial N$.
 Now there are two ways to attach a 2-dimensional
 1-handle to $S^1$, resulting in either one or two components (depending on whether the 1-handle preserves or reverses orientation).
 We will attach the round 1-handle so that the resulting
 fibers have two components. Before we attach the round 1-handle we modify the fibration on $\partial N$ by an isotopy.
 
 If we restrict our attention to the fibration on $\partial N$, thought of as the boundary of the cylinder with top and bottom identified
 with a 180 degree twist, then we can cut the cylinder open and think of $\partial N$ as a square with left and right edges identified by the
 identity map and the top edge identified to the bottom edge by a 1/2 unit shift to the right (see Figure \ref{squares}). We see the fibers of
 $\partial N$ as a pair of vertical arcs separated by 1/2 units in the horizontal direction. Our modification of the fibration on $\partial N$ involves
 isotoping the fibers (in a collar $\partial N \times I$) so that each fiber is horizontal along the two diagonal strips in the second picture
 of Figure \ref{squares}. The diagonal strips will form the attaching region of the round 1-handle, and the two horizontal sections of each fiber will be the
 attaching region of the 2-dimensional 1-handle to each fiber.
 
 Now we can extend the fibration on $\partial N$ across the round 1-handle (which we will denote by $R$) as follows (see Figure \ref{cylinders}):
 $\partial N$ consists of a circle's worth of $S^1$ fibers, and attaching $R$ has the effect of attaching a 2-dimensional 1-handle to each fiber.
 We extend the fibration over each of these 1-handles by taking the level sets corresponding to the natural Morse function on
 1-handle $\cup$ (fiber $\times I$), where the $I$ factor comes from a collar neighborhood $\partial N \times I$.
 So before the critical level the fibers will be circles, the critical level will be the wedge of two circles,
 and after the critical level the fibers will be a disjoint union of two circles. Therefore, adding the round 1-handle $R$
 introduces a fold singularity $C$ ($S^1 \times$ the Morse critical point of the 2-dimensional 1-handle), and 
 $\hat f$ maps $C$ to an embedded circle. The boundary $\partial(\partial N \cup R)$ is
 two disjoint tori (here we are only considering the ``interior'' part of the boundary, the exterior of course consists of another torus).
 Furthermore, as we can see in Figure \ref{cylinders}, each of these tori are fibered
 with multiplicity 1 (i.e. fibered by (1,1) curves). So we see that adding the round 1-handle reduces the 
 multiplicity from 2 to 1 at the expense of increasing the number of components of a fiber from 1 to 2. Now we can fill in these two tori
 with two trivially fibered solid tori (the (1,1) fibration on the boundary extends over the solid torus without singularities).
 Topologically we are just gluing back in the two solid tori of $N \setminus (\partial N \cup R)$, but in such a way as to extend the fibration.
 This completes our construction of a generic fibration on $N$.
 
\end{proof}

It is quite easy to extend our construction to the case of a $(p, 1)$ exceptional fiber:

\begin{prop}
If $N$ is a tubular neighborhood of a (p,1) exceptional fiber and $f \co N \to D^2$ is the fibration map, then
$N$ admits a generic fibration $\hat f \co N \to D^2$ constructed with $p-1$ round handles
such that $\left.\hat f\right|_{\partial N} = \left.f\right|_{\partial N}$.
The image of the critical set is $p-1$ embedded circles in $D^2$, and the preimage of a point in the
interior of this circle is $p$ disjoint circles.
\end{prop}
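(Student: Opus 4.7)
The plan is to proceed by induction on $p$, with the preceding lemma providing the base case $p=2$. The model is essentially the same as before: view $N$ as a solid cylinder with the two ends identified by a $2\pi/p$ rotation, so that $\partial N$ cuts open to a square whose top edge is identified to the bottom edge by a $1/p$ unit horizontal shift. The fibers of $\partial N$ then appear as $p$ equally spaced vertical arcs which, after identification, join into a single circle that wraps $p$ times around the central arc.

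For the inductive step, I would first isotope the fibration on $\partial N$ in a collar so that exactly one pair of adjacent vertical arcs is connected by horizontal segments joined along a pair of diagonal strips, in direct analogy with the $p=2$ picture from Figure \ref{squares}. These diagonal strips form the attaching region for a single 3-dimensional round 1-handle $R$, attached so as to glue a 2-dimensional 1-handle to each $S^1$ fiber across those two adjacent strands. Extending the boundary fibration over $R$ by Morse level sets of the standard handle model introduces exactly one indefinite fold singularity whose image is an embedded circle in $D^2$.

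Next I would analyze the effect on the fiber topology. Before the critical level the fiber is still a single circle wrapping $p$ times around the central arc; at the critical level the two joined strands pinch to a wedge of two circles; past the critical level the fiber splits into two components. Because we joined adjacent strands, one of these components wraps only once while the other wraps $p-1$ times. Hence the interior portion of $\partial(\partial N \cup R)$ consists of two tori, one carrying a $(1,1)$ fibration and one carrying a $(p-1,1)$ fibration. The $(1,1)$ torus is filled by a trivially fibered solid torus, contributing no new singularities.

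The remaining torus bounds a solid torus inside $N \setminus (\partial N \cup R)$ that topologically is a neighborhood of a $(p-1,1)$ exceptional fiber with matching boundary fibration, so by the inductive hypothesis it admits a generic fibration built from $p-2$ round handles, contributing $p-2$ further embedded critical circles and $p-1$ more chambers fibered by $(1,1)$ curves. In total this uses $p-1$ round handles, produces $p-1$ embedded critical circles in $D^2$, and yields $p$ disjoint circles as the fiber over a point in the innermost region. The main obstacle, and the step I expect will require the most care, is verifying that the ``larger'' torus produced by the round 1-handle attachment really carries a $(p-1,1)$ fibration and not some other $(p-1,q)$ fibration; this comes down to tracking how the $1/p$ shift identification interacts with the isotopy of the fibers on $\partial N$ and with the handle attachment, and then comparing the result to the standard model of a $(p-1,1)$ neighborhood used to set up the inductive hypothesis.
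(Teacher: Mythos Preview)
Your proposal is correct and follows essentially the same approach as the paper: attach one round 1-handle to $\partial N$ to split the multiplicity-$p$ boundary fibration into a multiplicity-$1$ piece (filled trivially) and a multiplicity-$(p-1)$ piece, then recurse. The paper presents this more tersely, relying on a figure to specify the attaching strips, and does not isolate your flagged concern about the remaining torus being $(p-1,1)$ rather than $(p-1,q)$; that point is left implicit in the picture, so your instinct to check it carefully is well placed but does not represent a divergence in strategy.
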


\begin{proof}
 
 We proceed as before, by starting with the fibration on $\partial N$, and attaching a 3-dimensional round 1-handle along the two strips as 
 in Figure \ref{fibrationsquare3} (after isotoping the fibration in a collar $\partial N \times I$ so that fibers are horizontal across the diagonal strips).
 We extend the fibration across the round handle as before, and the resulting interior boundary is again two tori, but this time one has
 multiplicity 1 and the other has multiplicity $p-1$. The torus fibered with multiplicity 1 can be filled with a trivially fibered solid torus,
 and we repeat this procedure inductively with the torus fibered with multiplicity $p-1$. The result is that we consecutively attach $p-1$
 round 1-handles (each one increasing the number of components of a fiber by 1) and glue in $p-1$ trivially fibered solid tori. This gives
 the required generic fibration on $N$.
 
\end{proof}

 \begin{figure}
\includegraphics[scale=0.4]{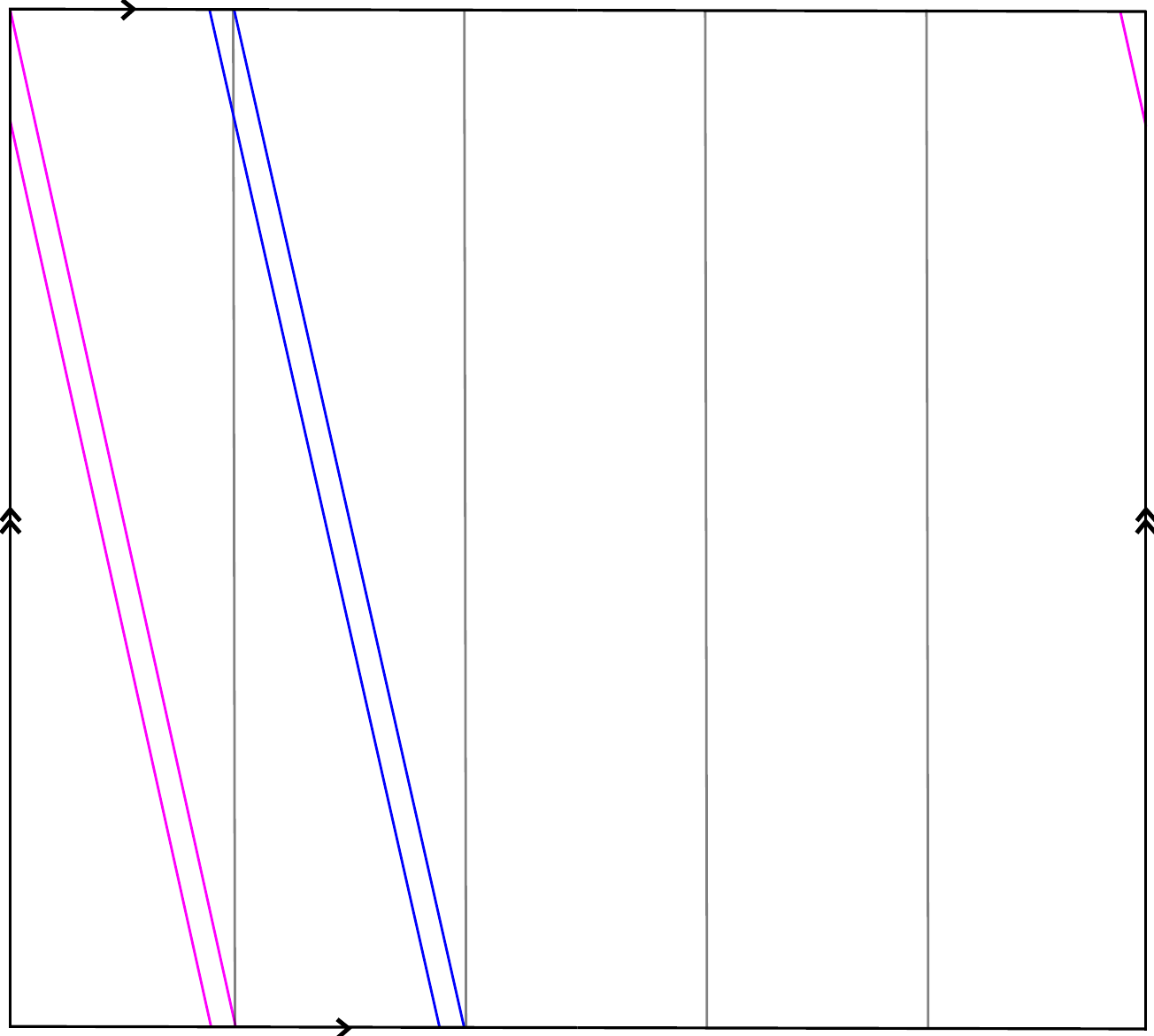}
 \centering
 \caption{Attaching a 3-dimensional round 1-handle to $\partial N$. 
 The diagonal strips form the attaching region for the first round 1-handle.
 Here we draw the case for a (5,1) exceptional fiber, 
 but the picture obviously generalizes to a $(p,1)$ exceptional fiber.}
\label{fibrationsquare3}
\end{figure}

One can construct generic fibrations in a neighborhood of a $(p,q)$ exceptional fiber in a similar manner, but the author has not worked out
a general algorithm.

We now proceed to the construction of generic fibrations around a torus multiple fiber. Here we describe the process for singular fibrations
resulting from \emph{integral} surgeries (which will suffice for our applications), but again, one could apply these techniques to non-integral
surgeries as well.

\begin{theorem}
The fibration around a multiple fiber singularity resulting from an integral torus surgery of multiplicity $p$ can be replaced with a generic
fibration (extending the fibration on the boundary) composed of $(p-1)$ round 1-handles and $(p-1)$ round 2-handles. The image of 
the indefinite fold critical set is $2 \cdot (p-1)$ consecutively embedded circles and the preimage of an interior point consists of $p$ disjoint tori.
\end{theorem}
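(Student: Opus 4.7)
The plan is to reduce to the three-dimensional case handled by the previous proposition. The key observation, already noted in Section~\ref{torus}, is that when the surgery is integral ($q=\pm 1$) the formula $\hat f(\xi_1,\xi_2,z)=\xi_2^k z^p$ is independent of $\xi_1$, so $\hat f$ factors as
\[
T^2\times D^2 \;\xrightarrow{\;\pi\;}\; S^1\times D^2 \;\cong\; N \;\xrightarrow{\;f_N\;}\; D^2,
\]
where $\pi$ is projection onto the last two factors and $N$ is identified with a neighborhood of a $(p,\pm 1)$ exceptional Seifert fiber so that $f_N(\xi_2,z)=\xi_2^k z^p$. Thus $\hat f$ is literally ``$S^1$ times'' the exceptional fiber fibration, and we should use the three-dimensional construction as a scaffold.

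The first step is to apply the Proposition to obtain a generic three-dimensional fibration $\tilde f_N\co N\to D^2$ agreeing with $f_N$ on $\partial N$, whose critical locus is a disjoint union of $p-1$ indefinite fold circles $C_1,\dots,C_{p-1}$ with nested images in $D^2$. Setting $\hat f_0 := \tilde f_N\circ\pi$, we obtain a map $T^2\times D^2\to D^2$ that agrees with $\hat f$ on the boundary but whose critical locus is the union of $p-1$ tori $\mathcal T_i := S^1\times C_i$. Each $\mathcal T_i$ is a Morse--Bott (not fold) critical submanifold, so $\hat f_0$ is not yet generic.

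The second step is to perturb $\hat f_0$ in disjoint tubular neighborhoods of the $\mathcal T_i$ so as to resolve each $\mathcal T_i$ into a pair of fold circles of Morse indices $1$ and $2$. In local coordinates $(\xi_1,t,x_1,x_2)$ near $\mathcal T_i$ in which $\hat f_0=(t,\,x_1^2-x_2^2)$, define
\[
\hat f_\epsilon(\xi_1,t,x_1,x_2) \;:=\; \bigl(t,\; x_1^2 - x_2^2 + \epsilon\,\beta(x_1,x_2)\cos\xi_1\bigr),
\]
with $\beta$ a compactly supported bump function equal to $1$ near the origin and $\epsilon>0$ small. A direct Hessian computation shows that the critical set becomes $\{\xi_1\in\{0,\pi\},\;x_1=x_2=0\}$: at $\xi_1=0$ the Hessian in $(\xi_1,x_1,x_2)$ has signature $(-,+,-)$, yielding an indefinite fold of Morse index $2$, while at $\xi_1=\pi$ the signature is $(+,+,-)$, yielding an indefinite fold of Morse index $1$. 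These correspond to a four-dimensional round $2$-handle and round $1$-handle respectively. Their images in $D^2$ are the image of $C_i$ translated by $\pm\epsilon$ in the Morse direction, so for $\epsilon$ small the $2(p-1)$ new fold circles remain consecutively nested.

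Finally, since each perturbation is compactly supported in the interior, $\hat f_\epsilon=\hat f$ on the boundary; and away from the supports of the bumps $\hat f_\epsilon=\hat f_0$, so the preimage of a point in the innermost region of $D^2$ minus the critical image is $\pi^{-1}$ of the $p$ disjoint circles comprising the innermost regular fiber of $\tilde f_N$, namely $p$ disjoint tori. The main obstacle is the Hessian computation above: one must verify that the $\cos$-perturbation resolves the Morse--Bott degeneracy into two genuine indefinite folds and that no cusp or higher-codimension singularity is introduced near the support boundary of $\beta$. A careful choice of $\beta$ depending only on $x_1^2+x_2^2$, so that rotational symmetry in the Morse plane is preserved, should make this routine.
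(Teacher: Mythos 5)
Your proposal is correct in substance, but it proves the theorem by a genuinely different route than the paper. The paper never forms the pulled-back map $\tilde f_N\circ\pi$ at all --- in fact it explicitly points out that ``$S^1$ times the generic fibration on $N$'' is unacceptable because its critical set would be the $2$-dimensional tori $S^1\times C_i$ --- and instead builds the generic fibration directly and explicitly: it splits the circle parameter of the $3$-dimensional round handle $R=S^1\times D^1\times D^1$ into arcs $I_1, I_2$ and embeds a $4$-dimensional round $1$-handle $S^1\times I_1\times D^1\times D^1$ and round $2$-handle $S^1\times I_2\times D^1\times D^1$ by the sheared maps $G_i(\varphi,\theta,x,t)=(\varphi+\theta,\,g(\varphi,x,t))$, checks fiberwise that the round $1$-handle raises the genus of each torus fiber and the round $2$-handle is separating, and then caps off with trivially fibered copies of $T^2\times D^2$; the multiplicity-$p$ case is done inductively. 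You instead take exactly the non-generic $S^1$-invariant map the paper rejects and repair it by a local symmetry-breaking perturbation, adding $\epsilon\,\beta\cos\xi_1$ near each Morse--Bott torus so that it resolves into an index-$1$ and an index-$2$ indefinite fold circle; your Hessian computation and your check (with $\beta$ radial) that no further critical points or cusps appear in the transition region are correct, and the statements about the $2(p-1)$ nested image circles and the $p$ torus fibers in the innermost region follow. What each approach buys: the paper's construction literally exhibits the claimed round-handle decomposition (each fold circle comes labeled as a round $1$- or $2$-handle attachment, with explicit control of the fibers at every stage, which is what feeds into the $E(n)_{p,q}$ pictures later), whereas your argument is shorter and more analytic but delivers the round-handle structure only implicitly --- to match the statement verbatim you should add the (easy, local-model) observation that crossing your index-$1$ and index-$2$ fold circles corresponds to attaching a round $1$-handle (one torus component becomes genus $2$) and then a round $2$-handle (the genus-$2$ component separates into two tori), and note that the product structure of the round handle in the Proposition is what guarantees your local coordinates $(t,x_1,x_2)$ with $\tilde f_N=(t,x_1^2-x_2^2)$ exist globally along each $C_i$.
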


\begin{proof}
 
As before, we will start with the multiplicity 2 case and then generalize to multiplicity $p$. Let $M$ be the neighborhood of the multiple fiber
singularity. By our remarks  in Section \ref{torus}, $M$ is fiber-preserving diffeomorphic to $S^1 \times N$, where $N$ is the fibered neighborhood of a (2,1)
exceptional fiber. Let us assume that the fibration on $\partial M = S^1 \times \partial N$ has been modified by isotopy so that the fibration is 
$S^1$ times the modified fibration on $\partial N$.
We will use our generic fibration on $N$ to construct a generic fibration on $M$, however,
the fibration is \emph{not} just $S^1$ times the generic fibration on $N$. In that case the singular set would be $S^1 \times C$, where $C$
is the singular circle of the generic fibration on $N$, and 2-dimensional singular sets do not occur generically.
In what follows it will be helpful to refer to Figure \ref{cylinders} and think of $M = S^1 \times N$ as a ``movie'' where time is the $S^1$ direction.

The generic fibration on $N$ was constructed using a 3-dimensional round 1-handle $R$, which we thought of as a circle's worth of
2-dimensional 1-handles. We use this family of 2-dimensional 1-handles to construct a 4-dimensional round 1-handle $R_1^4$ and a
4-dimensional round 2-handle $R_2^4$ as follows: Let $\theta$ parametrize the $S^1$ factor of $R = S^1 \times D^1 \times D^1$
(hence $\theta$ parametrizes the family of 2-dimensional 1-handles)
and let $x$ and $t$ be coordinates on the two $D^1$ factors. Define two subsets $I_1, I_2 \subset S^1$ by
$I_1 = \{(\cos \theta, \sin \theta) \in \mathbb{R}^2 \mid \theta \in [-\pi/4, \pi/4]\}$ and $I_2 = S^1 \setminus I_1$.
Let $g \co R \hookrightarrow N$ denote the embedding map from our previous construction (note this is not simply the attaching map,
but in fact embeds the entire round handle into $N$), and 
let $R_1^4 = S^1 \times I_1 \times D^1 \times D^1$ where $\varphi$ parametrizes the $S^1$ factor.

Embed $R_1^4$ into $M$ by the map
$G_1 \co S^1 \times I_1 \times D^1 \times D^1 \hookrightarrow S^1 \times N$, $G_1 (\varphi, \theta, x, t) = (\varphi + \theta, g(\varphi, x, t))$
(It is important to note that $g$ now takes $\varphi$ as input instead of $\theta$). We can think of attaching $R_1^4$ to 
$\partial M = S^1 \times \partial N$ by $\left.G_1\right|_{S^1 \times I_1 \times \partial D^1 \times D^1}$.
For a fixed value of $\varphi$, say $\varphi_0$, $\left.g\right|_{\{\varphi_0\} \times \partial D^1 \times D^1}$ 
maps to a single circle fiber $c$ of $\partial N$,
and so $\left.G_1\right|_{\{\varphi_0\} \times I_1 \times \partial D^1 \times D^1}$ is an attaching map for a 3-dimensional
1-handle to the torus fiber $S^1 \times c \subset S^1 \times \partial N = \partial M$. Indeed we see that 
$\left.G_1\right|_{\{\varphi_0\} \times I_1 \times D^1 \times D^1}$ embeds a 3-dimensional 1-handle into $M$  by embedding the 
2-dimensional 1-handle ``slices'' $\{\theta\} \times D^1 \times D^1$ into $\{\varphi_0 + \theta \} \times N \subset M$ for $\theta \in I_1$
(see Figure \ref{torusfiber}).
Letting $\varphi$ range over $S^1$ shows that attaching $R_1^4$ amounts to adding a 3-dimensional 1-handle to each torus fiber of
$S^1 \times \partial N$, so that the genus of the fibers increases by one.
However, this is done in a way such that if we look at the result of attaching $R_1^4$ in a single frame of our ``movie,''
$(\partial M \cup R_1^4) \cap (\{pt\} \times N)$, we see a 3-dimensional 1-handle attached to $\partial N$, but which is composed of
2-dimensional 1-handle ``slices,'' each slice belonging to a different $\left.G_1\right|_{\{\varphi\} \times I_1 \times D^1 \times D^1}$.
The result is that the fold singular set of $R_1^4$ intersects a frame of our movie, $\{pt\} \times N$, in a single point,
corresponding to the Morse singularity of the 3-dimensional 1-handle $\left.G_1\right|_{\{pt\} \times I_1 \times D^1 \times D^1}$
(which occurs at $\theta = 0 \in I_1$).

We embed the 4-dimensional round 2-handle $R_2^4 = S^1 \times I_2 \times D^1 \times D^1$ into $M$ similarly, by the map
$G_2 (\varphi, \theta, x, t) = (\varphi + \theta, g(\varphi, x, t))$ (indeed this is the same map, except $\theta$ now takes values in $I_2$).
We can think of attaching $R_2^4$ to $\partial (\partial M \cup R_1^4)$ by $\left.G_2\right|_{S^1 \times \partial (I_2 \times D^1) \times D^1}$,
and one can check that this amounts to attaching a 3-dimensional 2-handle $\left.G_2\right|_{\{\varphi\} \times I_2 \times D^1 \times D^1}$
to each genus 2 fiber of $\partial (\partial M \cup R_1^4)$. Now $\left.G_2\right|_{\{\varphi\} \times I_2 \times D^1 \times D^1}$ is 
actually a separating 3-dimensional 2-handle, and we can see this by looking at Figure \ref{torusfiber} (here we again consider a fixed $\varphi = \varphi_0$).
As $\theta$ varies over $I_2$ we add more 2-dimensional 1-handle slices to the picture whose attaching regions fill out the remainder of the 
two annuli. We see that this amounts
to attaching a 3-dimensional 2-handle to the genus 2 fiber whose attaching circle runs twice over the 1-handle. From the picture we see that
this is a separating 2-handle that results in two disjoint torus components.

Another way to see this is by considering the resulting fibration on the boundary: if we look at a frame of our movie after attaching
$R_1^4$ and $R_2^4$, $(\partial M \cup R_1^4 \cup R_2^4) \cap (\{pt\} \times N)$, we see $(\{pt\} \times \partial N) \cup R$, but the 
2-dimensional slices of $R$ in this frame belong to different slices of $R_1^4$ and $R_2^4$. The point is that topologically $\partial M \cup R_1^4 \cup R_2^4$
gives a decomposition of $S^1 \times (\partial N \cup R)$, but in such a way that the natural fibrations on the boundaries agree.
That is, the fibers of $\partial(\partial M \cup R_1^4 \cup R_2^4)$ are exactly $S^1$ times the fibers of $\partial (\partial N \cup R)$.
This means that after attaching $R_1^4$ and $R_2^4$ the fibers consist of two disjoint tori.
Furthermore, we observe that since the fibration on $N$ was completed by adding two trivially fibered solid tori to $\partial (\partial N \cup R)$,
our fibration on $M$ is completed by adding two trivially fibered $T^2 \times D^2$'s to 
$\partial M \cup R_1^4 \cup R_2^4 = S^1 \times (\partial N \cup R)$ (again we have reduced the multiplicity from 2 to 1).

To go from the multiplicity 2 case to the general case of multiplicity $p$, we repeat the above construction inductively
using the generic fibration around a $(p,1)$ exceptional fiber. The result will be a generic fibration with $p-1$ pairs of
4-dimensional round 1- and 2-handles added in succession. Each round 1-handle raises the genus by one on a single component
of a fiber, and then the following round 2-handle splits the genus 2 component into two tori. Therefore the preimage of a point
in the interior of the round singular images will be the disjoint union of $p$ tori.

\end{proof}

\begin{figure}
\includegraphics[scale=0.5]{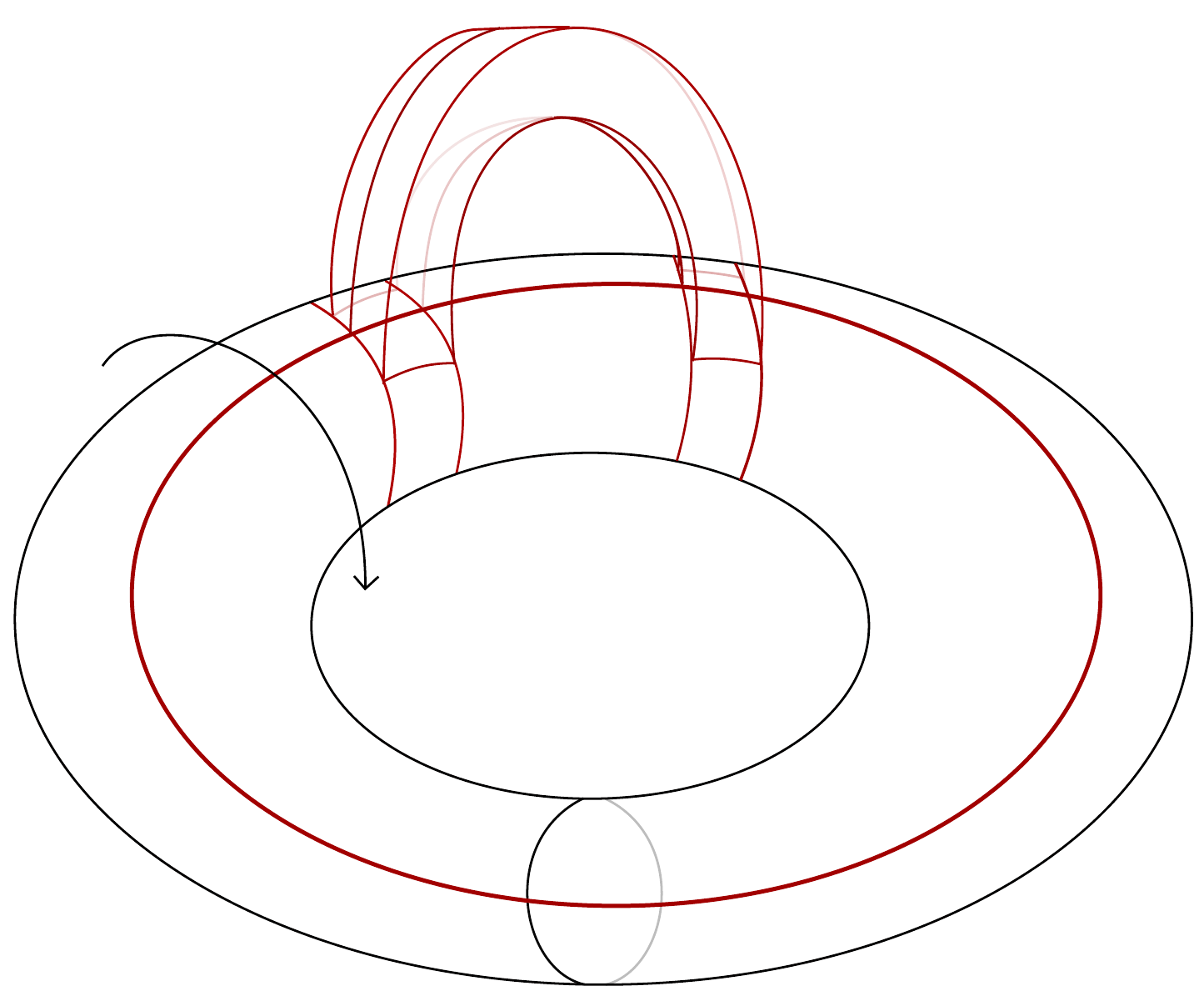}
 \centering
 \caption{Here we change our perspective and consider the contribution
 of $R_1^4$ to a single torus fiber $S^1 \times c \subset S^1 \times \partial N$ (the arrow in the picture shows the $S^1$ direction).
 The red circle on the torus is a single $\{pt\} \times c$, and adding $R_1^4$ corresponds to attaching a 2-dimensional 1-handle to
 the circle in a single frame $\{pt\} \times \partial N$. As $\theta \in I_1$ varies we add an interval's worth of 2-dimensional 1-handles
 that fill out a 3-dimensional 1-handle attached to our torus fiber.}
\label{torusfiber}
\end{figure}

Now that we have constructed one such generic fibration around a multiple fiber singularity, it is easy to produce others using the
homotopy moves of Baykur \cite{Bay08}, Lekili \cite{Lek}, and Williams \cite{Wil}.

\section{An application}

We conclude this note by applying our construction to give explicit broken Lefschetz fibrations (BLFs) on an important family of 4-manifolds:
the \emph{elliptic surfaces}. Our construction may be helpful for studying exotic behavior from the point of view of BLFs. 

An elliptic surface is a 4-manifold that admits a (possibly singular) fibration over a surface such that a regular fiber is diffeomorphic
to a torus, and with the extra condition that the fibration is locally holomorphic. Up to diffeomorphism we can assume that an elliptic surface
is equipped with a fibration map with only Lefschetz singularities and multiple fiber singularities coming from torus surgery
(see, for example, Gompf and Stipsicz \cite{GS}). Hence we can use
our construction to replace the fibration around a multiple fiber with a fibration with only indefinite fold singularities. 
The resulting fibration is by definition a BLF (since the only other singularities are Lefschetz singularities).
Of particular interest are the families of simply-connected exotic elliptic surfaces $E(n)_{p,q}$. 
The notation means that torus surgery is performed on two separate regular fibers of $E(n)$, one with multiplicity $p$ and the other
with multiplicity $q$ for relatively prime $p$ and $q$.
These regular fibers will lie in a cusp neighborhood, and so up to diffeomorphism the surgeries are determined by their multiplicity.
Hence we can apply our construction using \emph{integral} multiplicity $p$ and multiplicity $q$ surgery.

Lastly, we consider the special case of the Dolgachev surface $E(1)_{2,3}$.

\begin{example}
We construct a BLF on $E(1)_{2,3}$ by replacing the torus multiple fiber of multiplicity 2 with a generic fibration with two fold singularities
coming from a round 1-handle and a round 2-handle. The torus multiple fiber of multiplicity 3 is replaced with a generic fibration with 4 fold
singularities coming from two successive pairs of round 1- and 2-handles.
In Figure \ref{E_1_2,3} we draw the critical image on the base $S^2$ for the BLF on $E(1)_{2,3}$ resulting from our construction.
\end{example}

\begin{figure}
\includegraphics[scale=0.5]{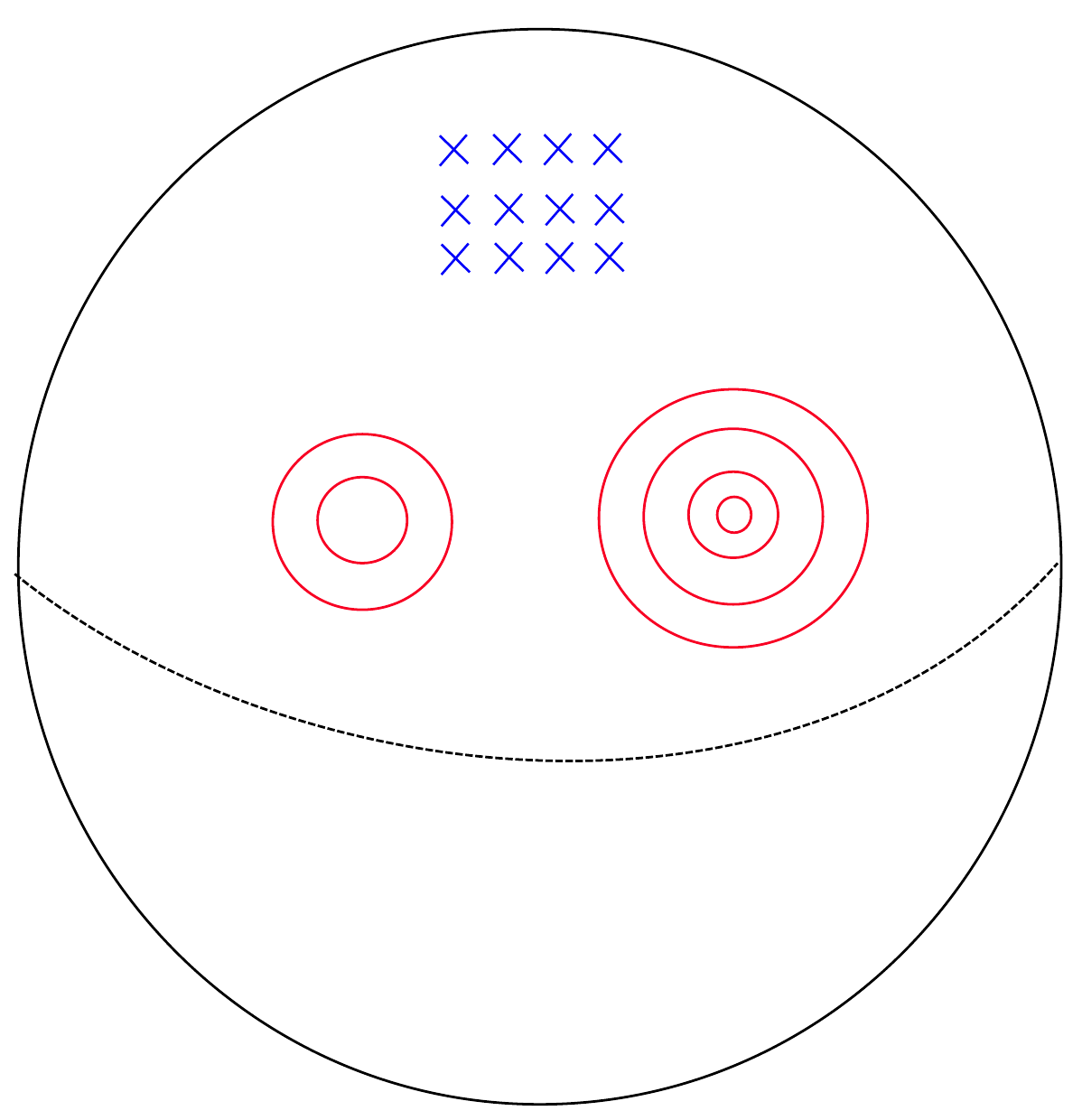}
 \centering
 \caption{The critical image on the base $S^2$ for a BLF on $E(1)_{2,3}$. The blue x's are the images of the 12 Lefschetz critical points,
 and the red circles are the images of the fold singularities.}
\label{E_1_2,3}
\end{figure}

%%%%%%%%%%%%%%%%%%%%   End of main body of article
%
%                             References
%
%   BiBTeX users uncomment the following line:
%
\bibliographystyle{gtart}
\bibliography{multiplefibers.bib}

%\begin{thebibliography}

%\end{thebibliography}

\end{document}